\newcommand\lhead{H. Vogt, J. Voigt}
\newcommand\rhead{Bands in $L_p$-spaces}
\numberwithin{equation}{section}
\newtheorem{theorem}{Theorem}[section]
\newtheorem{corollary}[theorem]{Corollary}
\newtheorem{proposition}[theorem]{Proposition}
\theoremstyle{definition}
\newtheorem{remark}[theorem]{Remark}
 \mathchardef\ordinarycolon\mathcode`\:
\newcommand\rlim{
\mathchoice{\vcenter{\hbox{${\scriptstyle{+}}$}}}
{\vcenter{\hbox{$\scriptstyle{+}$}}}
{\vcenter{\hbox{$\scriptscriptstyle{+}$}}}
{\vcenter{\hbox{$\scriptscriptstyle{+}$}}}}
\newcommand\ran{\operatorname{\rm ran}}
\newcommand\indic{\operatorname{\bf 1}\nolimits}
\newcommand{\lin}{\operatorname{lin}}
\renewcommand\phi{\varphi}
\renewcommand\epsilon{\varepsilon}
\newcommand{\R}{\mathbb{R}\nonscript\hskip.03em}
\newcommand{\N}{\mathbb{N}\nonscript\hskip.03em}
\newcommand\cA{\mathcal A}
\newcommand\cL{\mathcal L}
\newcommand\cE{\mathcal E}
\newcommand\lperp{\perp_\loc}
\newcommand\ndash{\rule[.58ex]{\widthof{--}}{0.065ex}} 
\newcommand\fin{{\rm fin}}
\newcommand\sfin{{\sigma\text{-fin}}}
\newcommand\loc{{\rm loc}}
\newcommand\rma{{\rm (a) }}
\newcommand\rmb{{\rm (b) }}
\newcommand\eul{{\rm e}}
\newcommand\esssup{\operatorname{\rm ess\,sup}}
\newcommand\clg{\mkern-2mu}   
\let\qedhere@ams\qedhere
\def\qedhere{\@ifnextchar[{\@qedhere}{\qedhere@ams}}
\def\@qedhere[#1]{\tag*{\raisebox{-#1ex}{\qedhere@ams}}}
\def\env@cases{%
  \let\@ifnextchar\new@ifnextchar
  \left\lbrace
  \def\arraystretch{1.1}%
  \array{@{\,}l@{\quad}l@{}}%
}
\renewcommand\section{\@startsection {section}{1}{\z@}%
                                     {-3.25ex \@plus -1ex \@minus -.2ex}%
                                     {1.5ex \@plus.2ex}%
                                     {\normalfont\large\bfseries}}
\newcommand\restrict{\vphantom f\mskip1mu\vrule\mskip2mu}
\newcommand\set[2]{\bigl\{#1{;}\;#2\bigr\}}
\newcommand\sset[2]{\{#1{;}\;#2\}}
\newcommand\setcol{;\;}
\renewcommand\le{\leqslant}
\renewcommand\ge{\geqslant}
\newcommand{\from}{{:}\;}
\renewcommand{\from}{\colon}
\newcommand\sse{\subseteq}
\newcommand\di{\mathclose{}\,\mathrm{d}}
\newcommand\slim{\mathop{\rm s\kern.08em\mbox{\rm -}lim}} 
\newcommand\abstracttext{\noindent
For a general measure space $(\Omega,\mu)$, it is shown that for every band $M$ 
in $L_p(\mu)$ there exists a decomposition 
$\mu=\mu'+\mu''$ such that $M=L_p(\mu')=\sset{f\in L_p(\mu)}{f=0\ 
\,\mu''\text{-a.e.}}$. The theory is illustrated by an example, with an 
application to absorption semigroups.

\vspace{8pt}

\noindent
MSC 2010: 46B42, 28A05, 47D06
\vspace{2pt}

\noindent
Keywords: band in $L_p$, band projection, decomposition of measures, 
localisable measure space, absorption semigroup
}
\begin{document}
\title{Bands in $L_p$-spaces}

\author{Hendrik Vogt and J\"urgen Voigt}

\date{}

\maketitle

\begin{abstract}
\abstracttext
\end{abstract}

\section*{Introduction}
\label{intro}

Let $(\Omega,\cA,\mu)$ be a measure space (in particular, $\cA$ is assumed to 
be a $\sigma$-algebra), and let $1\le p<\infty$. 
If $\Omega_0$ is a locally measurable subset of $\Omega$, and $M$ is the set of 
all functions in $L_p(\mu)$ vanishing a.e.\ outside $\Omega_0$, then $M$ is a 
band in $L_p(\mu)$ (see the beginning of 
Section~\ref{proj-bands} for the definition). This description holds for all 
bands if 
the measure $\mu$ is $\sigma$-finite (or, more generally, weakly localisable). 
Looking at the case of general measures one 
realises that such a property is true `locally', but the local sets cannot be 
composed to a locally measurable set, in general. Our main result, 
Theorem~\ref{mainthm}, is that this composition can be achieved on the level of 
measures.

The issue sketched above came up in connection with absorption semigroups; 
see~\cite{man-vog-voi-16}. More specifically, for an absorption semigroup $T_V$ 
on $L_p(\mu)$ with a `very singular' absorption rate $V$ it is known that 
$P:=\slim_{t\to0\rlim}T_V(t)$ is a band projection, and we demonstrate 
by an 
example that also in this specific context the band $\ran(P)$ can be of the 
above `strange type'.
\smallskip

In Section~\ref{prelims} we discuss some properties of measure spaces. In 
particular, for a general measure space we recall the definitions of locally 
measurable functions and sets, and we study relations between weak 
localisability and localisability.

In Section~\ref{proj-bands} we recall properties of bands in weakly localisable 
measure spaces, and we show our main result on bands for general measure spaces.

Section~\ref{example} is devoted to an example illustrating the main theorem in 
the case of a measure space that is not weakly localisable and to an example of 
an absorption semigroup as mentioned above.
\smallskip

Throughout we assume our $L_p$-spaces to consist of real-valued
functions.

\section{Preliminaries}
\label{prelims}

Let $(\Omega,\cA,\mu)$ be a measure space, and let $1\le p< \infty$. Then for 
every function $f\in L_p(\mu)$ the set $[f\ne0]$ is $\sigma$-finite, and this 
implies that for the definition and properties of $L_p(\mu)$ only the values of 
$\mu$ on the $\sigma$-ring
\[
\cA_\sfin :=\set{A\in\cA}{A\ \sigma\text{-finite}}
\]
play a role. We also define
\[
\cA_\fin := \set{A\in\cA}{\mu(A)<\infty}.
\]
We recall that a function $f\from\Omega\to\R$ is \emph{locally 
measurable} if $\indic_A\clg f$ is measurable for all $A\in\cA_\fin$.
A set $B\sse\Omega$ is \emph{locally measurable} if $B\cap 
A\in \cA$ for all $A\in\cA_\fin$, and $B$ is a \emph{local null set} if 
additionally $\mu(B\cap A)=0$ for all $A\in\cA_\fin$. 
We say that the measure space (or the measure $\mu$) is 
\emph{weakly localisable} if for 
any consistent family $(f_A)_{A\in\cA_\fin}$ of measurable functions 
$f_A\colon A\to\R$ there exists a locally measurable function 
$f\colon\Omega\to\R$ such that $f\restrict_A=f_A$ \,$\mu$-a.e.\ for all 
$A\in\cA_\fin$. The property just introduced is called `localisable' in 
\cite[Sec.\,14,~M]{kel-nam-63}. It appears that, more recently, `localisable'
is associated with a stronger property; see our discussion below. 

It will be convenient to use the following notation. Analogously to the family 
of functions $(f_A)_{A\in\cA_\fin}$ used above we will need a family of sets 
with index set $\cA_\fin$, and for each $A\in\cA_\fin$ the associated 
set will be denoted by~$A'$, where $A'$ belongs to $\cA\cap A$, the trace 
$\sigma$-algebra of $\cA$ on $A$. In other words, the family 
$(A')_{A\in\cA_\fin}$ is a mapping \ $'\colon\cA_\fin\to\cA_\fin$, satisfying 
$A'\in\cA\cap A$ for all $A\in\cA_\fin$.

\begin{remark}\label{rem-pre}
It is a standard fact that $\mu$ is weakly localisable if 
and only if for each consistent family $(A')_{A\in\cA_\fin}$ of sets 
$A'\in\cA\cap A$ there exists a locally measurable set $\Omega_0\sse\Omega$ 
such that $A'=\Omega_0\cap A$ \,$\mu$-a.e.\ for all $A\in\cA_\fin$. 
(For $B,C\in\cA$ we use the notation `$B=C$ \,$\mu$-a.e.' if 
$\indic_B=\indic_C$ \,$\mu$-a.e.)

For the necessity one considers the consistent family 
$(\indic_{A'})_{A\in\cA_\fin}$. If $f\from\Omega\to\R$ is the corresponding 
locally measurable function, then the set $\Omega_0:=[f=1]$ satisfies 
$A'=\Omega_0\cap A$ \,$\mu$-a.e.\ for all $A\in\cA_\fin$.

For the less trivial implication, the sufficiency, we refer to 
\cite[213N]{fre-10}, \cite[Exercise~4.59]{sal-16}. Even though the context in 
these references differs from ours, the proofs can be adapted to our situation.
\end{remark}

For the information of the reader we are going to put the concept of 
weak localisability in relation to other closely related notions of measure 
theory. We emphasise that the results presented in the remainder of 
this section will not be used in the following Sections~\ref{proj-bands} 
and~\ref{example}.

We recall that the measure space $(\Omega,\cA,\mu)$ is 
\emph{semi-finite} if for all 
$B\in\cA$ with $\mu(B)=\infty$ there exists $A\in\cA\cap B$ such 
that $0<\mu(A)<\infty$. This property is equivalent to the requirement that 
every local null set belonging to $\cA$ is already a null set. (This holds 
because $\mu(A)\in\{0,\infty\}$ for all local null sets $A\in\cA$.)

From \cite[Def.~211G]{fre-10} we adopt the terminology
that $(\Omega,\cA,\mu)$ is \emph{localisable} if $\mu$ is 
semi-finite, and for each system $\cE\sse\cA$ there exists 
$A=\esssup\cE\in\cA$ (that is, $A$ is the smallest set, up to $\mu$-null 
sets, containing all $B\in\cE$, again up to $\mu$-null sets). We also 
refer to \cite{sal-16}; the concept of localisability was first introduced by 
Segal \cite{seg-51} (in a context where `measure space' is defined slightly
differently).

We recall that $\sigma$-finite measure spaces are (weakly) localisable.
More information on localisable measure spaces can be found in \cite{fre-10} 
and \cite{sal-16}.

The following proposition shows, in particular, that localisability implies 
weak localisability.

\begin{proposition}\label{lem-intro-1}
A measure space $(\Omega,\cA,\mu)$ is localisable if and only if it is 
semi-finite, and for all consistent families $(A')_{A\in\cA_\fin}$ (as in 
Remark~\ref{rem-pre}) there exists $\Omega_0\in\cA$ such that 
$A'=\Omega_0\cap A$ \,$\mu$-a.e.\ for all $A\in\cA_\fin$.
\end{proposition}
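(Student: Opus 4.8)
The plan is to prove the two directions separately, in each using semi-finiteness only to turn ``local null set belonging to $\cA$'' into ``null set''. \emph{Necessity.} Assume $(\Omega,\cA,\mu)$ is localisable; it is then semi-finite by definition, so only the consistency statement has to be checked. Given a consistent family $(A')_{A\in\cA_\fin}$, I would apply localisability to the system $\cE:=\sset{A'}{A\in\cA_\fin}$ and put $\Omega_0:=\esssup\cE\in\cA$. Since each $A'$ lies in $\Omega_0$ up to a null set and $A'\sse A$, one immediately gets $A'\sse\Omega_0\cap A$ \,$\mu$-a.e. For the converse inclusion I would argue by contradiction: if $\mu\bigl((\Omega_0\cap A)\sms A'\bigr)>0$ for some $A\in\cA_\fin$, then the strictly smaller set $\Omega_1:=\Omega_0\sms\bigl((\Omega_0\cap A)\sms A'\bigr)$ still contains every $B'$ ($B\in\cA_\fin$) up to a null set, because the consistency relation $B'\cap A=A'\cap B$ \,$\mu$-a.e.\ forces $B'\cap\bigl((\Omega_0\cap A)\sms A'\bigr)$ to be a null set. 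This contradicts the minimality of $\Omega_0=\esssup\cE$, so $A'=\Omega_0\cap A$ \,$\mu$-a.e.\ for every $A\in\cA_\fin$.

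\emph{Sufficiency.} Assume $\mu$ is semi-finite and the stated property holds; I must produce $\esssup\cE$ for an arbitrary $\cE\sse\cA$. The main device is that a finite (hence $\sigma$-finite, hence localisable) measure space always admits essential suprema, obtained in the usual way as a countable union of members of the given family chosen so as to maximise its measure. So for each $A\in\cA_\fin$ I would pick $A'\in\cA\cap A$ to be an essential supremum of $\sset{E\cap A}{E\in\cE}$ in the finite measure space $(A,\cA\cap A,\mu\restrict_A)$. A routine check then shows that $(A')_{A\in\cA_\fin}$ is consistent: for $A\sse B$ in $\cA_\fin$ one verifies that $B'\cap A$ again satisfies the two defining properties of an essential supremum of $\sset{E\cap A}{E\in\cE}$ in $A$ (it is an upper bound since $E\cap A\sse B'\cap A$ \,$\mu$-a.e.; it is minimal since, whenever $E\cap A\sse C$ \,$\mu$-a.e.\ for all $E$, the set $C\cup(B'\sms A)$ dominates $\cE$ on $B$, whence $B'\cap A\sse C$ \,$\mu$-a.e.), so $B'\cap A=A'$ \,$\mu$-a.e.; the general consistency relation follows by applying this to $A\cap B\sse A$ and $A\cap B\sse B$. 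By hypothesis there is $\Omega_0\in\cA$ with $\Omega_0\cap A=A'$ \,$\mu$-a.e.\ for all $A\in\cA_\fin$.

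It remains to check that $\Omega_0=\esssup\cE$. For each $E\in\cE$ and $A\in\cA_\fin$ one has $E\cap A\sse A'=\Omega_0\cap A$ \,$\mu$-a.e., so $E\sms\Omega_0\in\cA$ is a local null set, hence a null set by semi-finiteness; thus $\Omega_0$ contains $\cE$ up to null sets. If $F\in\cA$ also contains every $E\in\cE$ up to a null set, then for each $A\in\cA_\fin$ the set $F\cap A$ is an upper bound for $\sset{E\cap A}{E\in\cE}$ in $A$, so $\Omega_0\cap A=A'\sse F\cap A$ \,$\mu$-a.e.; hence $\Omega_0\sms F$ is again a local null set belonging to $\cA$, therefore null. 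So $\Omega_0$ is, up to null sets, the smallest member of $\cA$ containing $\cE$, i.e.\ $\esssup\cE$, which establishes localisability.

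I expect the main obstacle to be the bookkeeping in the sufficiency part: setting up the trace-wise essential suprema $A'$ and proving their consistency without circularity, and making the two passages ``local null set in $\cA$ $\Rightarrow$ null set''---applied to the possibly non-$\sigma$-finite sets $E\sms\Omega_0$ and $\Omega_0\sms F$---cleanly via semi-finiteness. The necessity direction, by contrast, is essentially just unwinding the definition of $\esssup$ together with one careful use of the consistency hypothesis.
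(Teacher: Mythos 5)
Your proof is correct and follows essentially the same route as the paper: necessity via $\Omega_0:=\esssup\sset{A'}{A\in\cA_\fin}$, and sufficiency by forming trace-wise essential suprema $A'=\esssup\sset{E\cap A}{E\in\cE}$ on each $A\in\cA_\fin$, checking consistency, invoking the hypothesis, and using semi-finiteness to pass from local null sets to null sets. The only (inessential) difference is that you obtain the essential suprema on finite pieces by the classical exhaustion argument, whereas the paper appeals to order completeness of $L_1(\mu)$; your write-up also usefully fills in the minimality and consistency details that the paper leaves to the reader.
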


\begin{proof}
If the measure $\mu$ is localisable, then by definition it is semi-finite. 
For a consistent family $(A')_{A\in\cA_\fin}$ the set
$\Omega_0:=\esssup\sset{A'}{A\in\cA_\fin}$ is as required. This proves the
necessity. 

For the sufficiency let $\cE\sse\cA$. For all $A\in\cA_\fin$ there exists 
$A':=\esssup(\cE\cap A)$. (Here we use that $\sup\sset{\indic_{B\cap 
A}}{B\in\cE}$ exists in the order 
complete space $L_1(\mu)$ and is the indicator function of a set $A'\in\cA\cap 
A$.)
Then $(A')_{A\in\cA_\fin}$ is a consistent family, 
and by hypothesis there exists $\Omega_0\in\cA$ such that $A'=\Omega_0\cap A$ 
for 
all $A\in\cA_\fin$. 
The semi-finiteness of $\mu$ then implies
that $\Omega_0=\esssup\cE$.
\end{proof}

\begin{remark}\label{rem-prop-pre-1}
Clearly, in the `localisation condition' formulated in 
Proposition~\ref{lem-intro-1} one does not need the whole index set 
$\cA_\fin$, but it is 
sufficient to use an index set $\cA'\sse\cA_\fin$ such that for every set 
$A\in\cA_\fin$ there exists $B\in\cA'$ such that $B=A$ \,$\mu$-a.e. (The same 
comment applies to the 
definition of weak localisability and to Remark~\ref{rem-pre}.)
\end{remark}

Let us define the $\sigma$-algebra
\[
\cA_\loc := \set{B\sse\Omega}{B\text{ locally measurable}},
\]
and define
\[
\mu_\loc(B) := \sup\set{\mu(B\cap A)}{A\in\cA_\fin}\qquad (B\in\cA_\loc).
\]
Then $\mu_\loc$ is a measure, 
$\mu_\loc\restrict_{\cA_\sfin}=\mu\restrict_{\cA_\sfin}$, and $\mu_\loc(B)=0$ 
for all local null sets.
Clearly, the measure $\mu_\loc$ is semi-finite, and $\mu$ is semi-finite if and 
only if $\mu_\loc\restrict_\cA=\mu$.

We note that for any $A\in\cA_{\loc,\fin}$ there exist a set $A'\in\cA_\fin$ 
and a local null set $A''$ such that $A=A'\cup A''$.
Indeed, $\mu_\loc(A)$ can be obtained as $\lim_{n\to\infty}\mu(A_n)$, with an 
increasing sequence $(A_n)_{n\in\N}$ in $\cA_\fin$ of subsets of $A$; therefore 
$A':=\bigcup_{n\in\N}A_n\in\cA_\fin$, and $A'':=A\setminus A'$ is a local null 
set. As a consequence one also concludes that every set $A\in\cA_{\loc,\sfin}$ 
can be written as $A=A'\cup A''$, with $A'\in\cA_\sfin$ and a local null set 
$A''$.

We further observe that for all $p\in[1,\infty)$ one obtains  
$L_p(\mu)=L_p(\mu_\loc)$, with the following interpretation. It is clear that 
every function $f\in L_p(\mu)$ is also measurable with respect to the 
$\sigma$-Algebra $\cA_\loc$ and belongs to $L_p(\mu_\loc)$. On the other hand, 
if $f\in L_p(\mu_\loc)$, then $[f\ne0]\in\cA_{\loc,\sfin}$, and by the 
previous paragraph one has $[f\ne0]=A'\cup A''$, with $A'\in\cA_\sfin$ and a 
local null set $A''$. This implies that $\indic_{A'}\clg f=f$ in 
$L_p(\mu_\loc)$ and that $\indic_{A'}\clg f\in L_p(\mu)$. One then concludes 
that the 
mapping $L_p(\mu_\loc) \ni f\mapsto \indic_{A'}\clg f\in L_p(\mu)$ is an 
isomorphism.

\begin{proposition}\label{prop-intro}
The measure space $(\Omega,\cA,\mu)$ is weakly localisable if 
and only if $(\Omega,\cA_\loc,\mu_\loc)$ is localisable.
\end{proposition}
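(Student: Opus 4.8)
The plan is to reduce the localisability of $(\Omega,\cA_\loc,\mu_\loc)$ to a ``localisation condition'' indexed by $\cA_\fin$, and then to recognise that this condition is precisely the characterisation of weak localisability of $\mu$ given in Remark~\ref{rem-pre}.

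First I would apply Proposition~\ref{lem-intro-1} to the measure space $(\Omega,\cA_\loc,\mu_\loc)$. Since $\mu_\loc$ is always semi-finite, that proposition says that $(\Omega,\cA_\loc,\mu_\loc)$ is localisable if and only if for every consistent family $(B')_{B\in\cA_{\loc,\fin}}$ with $B'\in\cA_\loc\cap B$ there exists $\Omega_0\in\cA_\loc$ with $B'=\Omega_0\cap B$ \,$\mu_\loc$-a.e.\ for all $B\in\cA_{\loc,\fin}$. Next I would cut down the index set using Remark~\ref{rem-prop-pre-1}: by the paragraph preceding this proposition one has $\cA_\fin\sse\cA_{\loc,\fin}$, and every $B\in\cA_{\loc,\fin}$ can be written as $B=A'\cup A''$ with $A'\in\cA_\fin$ and $A''$ a local null set, whence $B=A'$ \,$\mu_\loc$-a.e.\ (local null sets being $\mu_\loc$-null). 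Thus $\cA_\fin$ is cofinal in $\cA_{\loc,\fin}$ in the sense of Remark~\ref{rem-prop-pre-1}, and the condition above is equivalent to the same statement with the index $B$ restricted to range over $\cA_\fin$.

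It then remains to rephrase this reduced condition in terms of $\mu$. For $A\in\cA_\fin$ one has $\cA_\loc\cap A=\cA\cap A$ (a locally measurable subset of $A$ lies in $\cA$ since $A\in\cA_\fin$), and for sets contained in such an $A$ the notions of $\mu_\loc$-a.e.\ equality and $\mu$-a.e.\ equality coincide, because $\mu_\loc\restrict_{\cA_\sfin}=\mu\restrict_{\cA_\sfin}$ and the relevant symmetric differences lie in $\cA_\fin\sse\cA_\sfin$. Hence a consistent family $(A')_{A\in\cA_\fin}$ for $(\Omega,\cA_\loc,\mu_\loc)$ is the same object as a consistent family with $A'\in\cA\cap A$ in the sense of Remark~\ref{rem-pre}, and the reduced localisation condition becomes: for every such family there exists a locally measurable set $\Omega_0$ with $A'=\Omega_0\cap A$ \,$\mu$-a.e.\ for all $A\in\cA_\fin$. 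By Remark~\ref{rem-pre} this is exactly weak localisability of $\mu$, which closes the chain of equivalences.

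I do not expect a genuine obstacle; the argument is a chain of ``if and only if'' steps, and the only point requiring care is the bookkeeping in the last step --- checking that the classes of consistent families and the a.e.\ relations really transfer between the two measure spaces once the index set is taken to be $\cA_\fin$. This is ensured by the identities $\cA_\loc\cap A=\cA\cap A$ for $A\in\cA_\fin$ and $\mu_\loc\restrict_{\cA_\sfin}=\mu\restrict_{\cA_\sfin}$, together with the already recorded facts that local null sets are $\mu_\loc$-null and that every $A\in\cA_{\loc,\fin}$ agrees $\mu_\loc$-a.e.\ with a set in $\cA_\fin$.
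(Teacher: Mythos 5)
Your argument is correct and follows essentially the same route as the paper's proof: apply Proposition~\ref{lem-intro-1} to the semi-finite measure $\mu_\loc$, shrink the index set from $\cA_{\loc,\fin}$ to $\cA_\fin$ via Remark~\ref{rem-prop-pre-1} and the decomposition $B=A'\cup A''$ with $A''$ a local null set, and transfer consistency and the a.e.\ identities between $\mu$ and $\mu_\loc$ to recover the characterisation of weak localisability in Remark~\ref{rem-pre}. Your extra bookkeeping (e.g.\ $\cA_\loc\cap A=\cA\cap A$ for $A\in\cA_\fin$) only makes explicit what the paper leaves implicit.
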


\begin{proof}
Recall that $\mu\restrict_{\cA_\fin}=\mu_\loc\restrict_{\cA_\fin}$ and that for 
every $A\in\cA_{\loc,\fin}$ there exists $A'\in\cA_\fin$ such that $A'=A$ 
\,$\mu_\loc$-a.e. This implies that, in order to verify the `localisation 
condition' of Proposition~\ref{lem-intro-1} for $\mu_\loc$, it is sufficient to 
use $\mu_\loc$-consistent families $(A')_{A\in\cA_\fin}$; 
cf.~Remark~\ref{rem-prop-pre-1}. In fact, such a family is 
$\mu_\loc$-consistent if and only if it is $\mu$-consistent.

If $\Omega_0\in\cA_\loc$, $(A')_{A\in\cA_\fin}$ is a consistent family, and 
$A\in\cA_\fin$, then $A'=\Omega_0\cap A$ \,$\mu$-a.e.\ if and only if 
$A'=\Omega_0\cap A$ \,$\mu_\loc$-a.e. 
This shows that $\mu$ is weakly localisable if and only if $\mu_\loc$ satisfies 
the `localisation condition' of Proposition~\ref{lem-intro-1}. As $\mu_\loc$ is 
semi-finite the latter is equivalent to the localisability of $\mu_\loc$.
\end{proof}

The measure space $(\Omega,\cA,\mu)$ is \emph{locally determined} if it is 
semi-finite and every locally measurable set is measurable.
One easily sees that $(\Omega,\cA,\mu)$ is locally determined if and 
only if $\cA_\loc=\cA$ and $\mu_\loc=\mu$.
This implies that the following statement is immediate from 
Proposition~\ref{prop-intro}.

\begin{corollary}\label{cor-lem-intro-1}
If the measure space $(\Omega,\cA,\mu)$ is locally determined and weakly 
localisable, then it is localisable.
\end{corollary}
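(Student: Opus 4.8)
The plan is a direct combination of Proposition~\ref{prop-intro} with the characterisation of local determinacy recorded just above the statement. First I would note that, since $(\Omega,\cA,\mu)$ is locally determined, one has $\cA_\loc=\cA$ and $\mu_\loc=\mu$; consequently the measure space $(\Omega,\cA_\loc,\mu_\loc)$ is literally the same as $(\Omega,\cA,\mu)$.

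Next I would apply Proposition~\ref{prop-intro} to $(\Omega,\cA,\mu)$: weak localisability of $(\Omega,\cA,\mu)$ is equivalent to localisability of $(\Omega,\cA_\loc,\mu_\loc)$. Using the identification from the first step, this is the same as localisability of $(\Omega,\cA,\mu)$ itself. Since $(\Omega,\cA,\mu)$ is assumed weakly localisable, it is therefore localisable, which is the assertion.

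I do not anticipate any real obstacle: the argument is a one-line substitution. The only point that has to be in place is the elementary equivalence between local determinacy and the pair of conditions $\cA_\loc=\cA$, $\mu_\loc=\mu$, which has already been observed in the text (via semi-finiteness being equivalent to $\mu_\loc\restrict_\cA=\mu$, together with measurability of all locally measurable sets meaning $\cA_\loc=\cA$); everything else is simply read off from Proposition~\ref{prop-intro}.
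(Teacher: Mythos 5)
Your argument is correct and coincides with the paper's: the corollary is obtained by identifying $(\Omega,\cA_\loc,\mu_\loc)$ with $(\Omega,\cA,\mu)$ via local determinacy and then invoking Proposition~\ref{prop-intro}. Nothing is missing.
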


\section{Bands in $L_p(\mu)$}
\label{proj-bands}

Let $(\Omega,\cA,\mu)$ be a measure space, and let $1\le p<\infty$. A 
\emph{band} $M$ in 
a vector lattice $E$ is a 
lattice ideal that additionally is stable 
under forming suprema of subsets, i.e., if $A\sse M$ is a set for which $\sup 
A$ 
exists in $E$, then $\sup A\in M$. 
We recall from \cite[Chap.\,II, \S2]{sch-74} 
that every band $M$ in $L_p(\mu)$ is automatically a \emph{projection band}, 
i.e.,
\[
L_p(\mu) = M\oplus M^\perp
\]
is the topological direct sum of $M$ and its disjoint complement
\[
M^\perp := \set{f\in L_p(\mu)}{|f|\wedge|g|=0\ (g\in M)},
\]
and then one has $M=M^{\perp\perp}$. The 
projection $P\in\cL(L_p(\mu))$ onto $M$ satisfies $0\le P\le I$, and 
conversely, 
every projection $P$ satisfying $0\le P\le I$ is a band projection 
(\cite[Thm.~1.44]{ali-bur-85}).

In the following we recall that in the case of `nice' measure spaces, 
band projections are multiplication operators by indicator functions of locally 
measurable sets. If $P$ is a band 
projection and there exists a locally measurable function $h\colon\Omega\to\R$ 
such that $Pf=hf$ for all $f\in L_p(\mu)$, then it is easy to see that 
$\Omega\setminus\bigl([h=\nobreak0]\cup[h=\nobreak1]\bigr)$ is a local null 
set. This implies
that $P$ is the multiplication operator by the indicator function of the set 
$\Omega_0:=[h=1]$, and the band $M:=\ran(P)$ is given by
\begin{equation}\label{nice-band}
M=\set{f\in L_p(\mu)}{f=0\ \,\mu\text{-a.e. on }\Omega\setminus\Omega_0}. 
\end{equation}

It is a consequence of \cite[Theorem~7]{zaa-75} that in the case of 
$\sigma$-finite measures, a band is always of the kind described in the 
previous 
paragraph. This fact is part of the assertion in the following proposition.

\begin{proposition}\label{prop-main} 
The measure $\mu$ is weakly localisable if and only 
if for every band $M$ in $L_p(\mu)$ there exists a locally measurable set 
$\Omega_0$ such that the band projection $P$ onto $M$ is given by 
$Pf=\indic_{\Omega_0}\clg f$ ($f\in L_p(\mu)$). 
\end{proposition}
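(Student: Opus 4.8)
The plan is to prove the two implications separately, taking the case of $\sigma$-finite (in particular, finite) measures — which, as recalled before the proposition, follows from \cite[Theorem~7]{zaa-75} — as already known. \emph{Necessity.} Suppose $\mu$ is weakly localisable and let $M$ be a band in $L_p(\mu)$ with band projection $P$. I would first observe that $|Pf|\le|f|$ \,$\mu$-a.e.: the elements $Pf$ and $(I-P)f$ are disjoint with sum $f$, so $|f|=|Pf|+|(I-P)f|$. Hence, for every $A\in\cA_\fin$, the band $E_A:=\sset{f\in L_p(\mu)}{f=0\ \mu\text{-a.e. on }\Omega\sms A}$, which is a Banach lattice isomorphic to $L_p(\mu\restrict_A)$, is invariant under $P$, and $P$ restricts to a band projection of $E_A$. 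As $\mu\restrict_A$ is finite, the known case provides a set $A'\in\cA\cap A$ with $Pg=\indic_{A'}\clg g$ for all $g\in E_A$; in particular $P\indic_A=\indic_{A'}$. Applying this to $E_{A\cap B}\sse E_A\cap E_B$ gives $\indic_{A'\cap B}=\indic_{(A\cap B)'}=\indic_{B'\cap A}$ \,$\mu$-a.e., so $(A')_{A\in\cA_\fin}$ is a consistent family. Weak localisability (Remark~\ref{rem-pre}) now yields a locally measurable $\Omega_0$ with $A'=\Omega_0\cap A$ \,$\mu$-a.e.\ for all $A\in\cA_\fin$, whence $P\indic_A=\indic_{\Omega_0}\clg\indic_A$; since $\lin\sset{\indic_A}{A\in\cA_\fin}$ is dense in $L_p(\mu)$ and both $P$ and $f\mapsto\indic_{\Omega_0}\clg f$ are bounded, I conclude that $Pf=\indic_{\Omega_0}\clg f$ for all $f\in L_p(\mu)$.

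\emph{Sufficiency.} For the converse I would use Remark~\ref{rem-pre}: given a consistent family $(A')_{A\in\cA_\fin}$ it suffices to produce a locally measurable $\Omega_0$ with $A'=\Omega_0\cap A$ \,$\mu$-a.e.\ for all $A$. The idea is to read the family off from a suitable band; I would take
\[
M:=\set{f\in L_p(\mu)}{\indic_{A\sms A'}\clg f=0\text{ for all }A\in\cA_\fin}.
\]
This $M$ is plainly a lattice ideal, and it is closed under suprema: for $A\in\cA_\fin$ the multiplication by $\indic_{A\sms A'}$ (note $A\sms A'\in\cA_\fin$) is a band projection, hence order continuous, so it maps the supremum of any subset of $M$ to $\sup0=0$. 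By hypothesis the band projection onto $M$ has the form $f\mapsto\indic_{\Omega_0}\clg f$ for some locally measurable $\Omega_0$, and then, by~\eqref{nice-band}, $M=\ran P=\sset{f\in L_p(\mu)}{f=0\ \mu\text{-a.e. on }\Omega\sms\Omega_0}$.

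It remains to identify $\Omega_0$. Fix $A\in\cA_\fin$. From the consistency relation $A'\cap B=B'\cap A$ \,$\mu$-a.e.\ one reads off $A'\cap B\sse B'$ \,$\mu$-a.e.\ for every $B\in\cA_\fin$, i.e.\ $\indic_{B\sms B'}\clg\indic_{A'}=0$; hence $\indic_{A'}\in M$, so $\indic_{A'}=\indic_{\Omega_0}\clg\indic_{A'}$, which gives $A'\sse\Omega_0$ \,$\mu$-a.e.\ and therefore $A'\sse\Omega_0\cap A$ \,$\mu$-a.e. Conversely, $\Omega_0\cap A\in\cA_\fin$ and $g:=\indic_{(\Omega_0\cap A)\sms A'}$ vanishes off $\Omega_0$, so $g\in\ran P=M$; evaluating the defining condition of $M$ at the index $A$ (noting that $g$ is supported in $A\sms A'$) forces $g=\indic_{A\sms A'}\clg g=0$, i.e.\ $\Omega_0\cap A\sse A'$ \,$\mu$-a.e. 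Thus $A'=\Omega_0\cap A$ \,$\mu$-a.e.\ for all $A\in\cA_\fin$, and Remark~\ref{rem-pre} yields that $\mu$ is weakly localisable.

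I expect the only genuinely delicate points to be the two reductions. In the necessity part these are the passage to the finite measure spaces $\mu\restrict_A$ — which relies on $|Pf|\le|f|$, so that $P$ restricts to $E_A$ — and the check that the resulting sets $A'$ assemble into a \emph{consistent} family (this is where the behaviour on $E_{A\cap B}$ enters). In the sufficiency part they are the choice of the band $M$ and the verification that it is closed under suprema, together with the membership $\indic_{A'}\in M$; all three hinge on the consistency relation and on the order continuity of band projections. Everything else is routine bookkeeping.
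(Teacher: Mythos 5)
Your proof is correct and follows essentially the same route as the paper's: for the necessity you restrict $P$ to the finite-measure pieces, assemble the resulting sets $A'$ into a consistent family, and invoke Remark~\ref{rem-pre} together with a density argument, and for the sufficiency you build the band $M$ from a given consistent family and identify $\Omega_0$ via $A'=\Omega_0\cap A$ \,$\mu$-a.e. The only notable deviation is that you import the finite-measure case from \cite{zaa-75}, whereas the paper proves it directly via $h:=P\1=\indic_{\Omega_0}$; your extra verifications (invariance of $E_A$ under $P$, that $M$ is a band, the two inclusions identifying $\Omega_0$) merely make explicit steps the paper treats as immediate.
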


\begin{proof}
In order to show the necessity we first treat the case that $\mu$ is finite. 
Then from $P1 + (I-P)1=1$ and 
$P1\wedge(I-P)1 = 0$ it is immediate that $h:=P1 = 
\indic_{\Omega_0}$ for a measurable set $\Omega_0\sse\Omega$. If $f\in 
L_p(\mu)$, 
$0\le f\le 1$, then $0\le Pf\le P1$ and $0\le(I-P)f\le(I-P)1$, and 
using also $f=Pf+(I-P)f$ one obtains $Pf=\indic_{\Omega_0}\clg f=hf$. 
By denseness of $\lin\sset{f\in L_p(\mu)}{0\le f\le 1}$ in $L_p(\mu)$ and 
continuity, the equality 
$Pf=hf$ carries over to all $f\in L_p(\mu)$.

Now assume that $\mu$ is weakly localisable. For $A\in\cA_\fin$ let $\mu_A$ 
denote the restriction of $\mu$ to 
$\cA\cap A$,
and consider $L_p(A,\mu_A)$ as as a subspace of $L_p(\mu)$.
Then it is immediate that the restriction $P_A$ of $P$ to
$L_p(A,\mu_A)$ is the band projection 
onto $M\cap L_p(A,\mu_A)$; hence -- by the first part of the proof 
-- there exists 
$A'\in\cA\cap A$ such that $P_Af=\indic_{A'}\clg f$ for all $f\in 
L_p(A,\mu_A)$.
It is not difficult to see that the family 
$(A')_{A\in\cA_\fin}$ is consistent. The weak localisability of $\mu$ -- 
together with Remark~\ref{rem-pre} -- implies that there exists a
locally measurable set 
$\Omega_0$ such that $Pf=\indic_{\Omega_0}\clg f$ for all $f\in 
L_p(\mu)$ 
with $\mu([f\ne0])<\infty$. By denseness of $\set{f\in L_p(\mu)}{\mu([f\ne 
0])<\infty}$ in $L_p(\mu)$ and continuity,
the equality $Pf=\indic_{\Omega_0}\clg f$ carries over to all $f\in L_p(\mu)$.

For the proof of the sufficiency let $(A')_{A\in\cA_\fin}$ be a consistent 
family of sets $A'\in\cA\cap A$. Then
\[
M:=\set{f\in L_p(\mu)}{f=0\ \,\mu\text{-a.e.\ on }A\setminus A'\ (A\in\cA_\fin)}
\]
is a band in $L_p(\mu)$. By hypothesis, there exists a locally measurable set 
$\Omega_0\sse\Omega$ such that the band projection~$P$ onto $M$ is given by
$Pf=\indic_{\Omega_0}\clg f$ for all $f\in L_p(\mu)$. 
In particular it follows that
$\indic_{A'}=P\indic_A=\indic_{\Omega_0}\clg\indic_A$, i.e.,
$A'=\Omega_0\cap A$ \,$\mu$-a.e., for all $A\in\cA_\fin$.
\end{proof}

As a consequence of Proposition~\ref{prop-main} one concludes that
for measure 
spaces that are not weakly localisable, there will always exist a band which
is 
not of the form described in \eqref{nice-band}. This will be illustrated by an 
example in 
Section~\ref{example}. 
In order to motivate our replacement for the
description \eqref{nice-band} we mention that this description can also be 
formulated by stating 
that there exists a locally measurable set $\Omega_0\sse\Omega$ such that for 
the measures $\mu':=\indic_{\Omega_0}\mu$ and
$\mu'':=\indic_{\Omega\setminus\Omega_0}\mu$ one has $M=L_p(\mu')$ and
$M^\perp = L_p(\mu'')$~-- in a sense made precise in 
Remark~\ref{remmain-1}(c). 

If $\mu', \mu''$ are measures on $\cA$ such that $\mu'\le\mu$, $\mu''\le\mu$, 
then $\mu', \mu''$ will be called \emph{locally disjoint}, denoted by 
$\mu'\perp_\loc\mu''$, if for all $A\in\cA_\fin$ there exist $A',A''\in\cA$, 
$A=A'\cup A''$, $A'\cap A''=\varnothing$, such that 
$\mu'(A'') = \mu''(A') = 0$.

The following theorem is the main result of this paper.

\begin{theorem}\label{mainthm}
\rma Let $\mu=\mu'+\mu''$ be a locally disjoint decomposition of $\mu$. 
Then
\begin{equation}\label{equ1}
M:=\set{f\in L_p(\mu)}{f=0\ \,\mu''\text{-a.e.}}
\end{equation}
is a band in $L_p(\mu)$, with
\begin{equation}\label{equ2}
M^\perp = \set{f\in L_p(\mu)}{f=0\ \,\mu'\text{-a.e.}}.
\end{equation}

\rmb Conversely, for each band $M$ in $L_p(\mu)$ there exists a 
locally disjoint decomposition $\mu=\mu'+\mu''$ of $\mu$ such that
$M$ and $M^\perp$ are given by \eqref{equ1} and \eqref{equ2}.
\end{theorem}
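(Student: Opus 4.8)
For part~\rma the plan is to prove the single identity $M=N^\perp$, where $N:=\set{f\in L_p(\mu)}{f=0\ \mu'\text{-a.e.}}$; once this is done, $M$ is a band (the disjoint complement of any subset of a vector lattice is a band), and, the hypothesis being symmetric in $\mu'$ and $\mu''$, the same argument applied with the roles of $\mu',\mu''$ interchanged gives $N=M^\perp$, which is \eqref{equ2}. (For $f\in L_p(\mu)$ the set $[f\ne0]$ is $\sigma$-finite, so ``$f=0$ $\mu''$-a.e.'' unambiguously means $\mu''([f\ne0])=0$, and likewise for $\mu'$.) Part~\rmb will then be obtained by constructing $\mu'$ and $\mu''$ explicitly from the band projection $P$ onto $M$, proving $M=\set{f\in L_p(\mu)}{f=0\ \mu''\text{-a.e.}}$, and invoking \rma for the statement about $M^\perp$.

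To prove $M\sse N^\perp$, let $f\in M$ and $g\in N$; it suffices to show $\mu\bigl([f\ne0]\cap[g\ne0]\bigr)=0$, and as this set is $\sigma$-finite it is enough to prove $\mu(A)=0$ for every $A\in\cA_\fin$ contained in it. Writing $A=A'\cup A''$, $A'\cap A''=\varnothing$, as in the definition of $\mu'\lperp\mu''$, the mixed terms vanish, so $\mu(A)=\mu'(A')+\mu''(A'')\le\mu'([g\ne0])+\mu''([f\ne0])=0$. For $N^\perp\sse M$, let $f\in N^\perp$; it suffices to prove $\mu''(A)=0$ for every $A\in\cA_\fin$ with $A\sse[f\ne0]$. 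With $A=A'\cup A''$ as above, $\indic_{A''}\in N\cap L_p(\mu)$, hence $f\perp\indic_{A''}$; since $A''\sse[f\ne0]$ this forces $\mu(A'')=0$, so $\mu''(A'')=0$, and together with $\mu''(A')=0$ we get $\mu''(A)=0$. By $\sigma$-finiteness of $[f\ne0]$ we conclude $\mu''([f\ne0])=0$, i.e.\ $f\in M$.

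For \rmb, as in the proof of Proposition~\ref{prop-main} there is a consistent family $(A')_{A\in\cA_\fin}$ with $A'\in\cA\cap A$ and $Pf=\indic_{A'}\clg f$ for all $f\in L_p(A,\mu_A)$. The essential idea is now to glue this family not to a (generally non-existent) locally measurable set but to a measure: put $A'':=A\sms A'$ and
\[
\mu'(E):=\sup_{A\in\cA_\fin}\mu(E\cap A')\qquad(E\in\cA).
\]
Using consistency (which gives $A_1'\cup A_2'\sse(A_1\cup A_2)'$ $\mu$-a.e.) one checks that $\mu'$ is a measure, $\mu'\le\mu$, that $\mu'(E)=\mu(E\cap A')$ whenever $E\sse A\in\cA_\fin$ (so $\mu'$ coincides with $\mu$ on $\cA\cap A'$ and vanishes on $\cA\cap A''$), and that $\mu'$ is semi-finite. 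One then picks a measure $\mu''$ with $\mu=\mu'+\mu''$; then $\mu''$ vanishes on $\cA\cap A'$ and coincides with $\mu$ on $\cA\cap A''$, so the decomposition is locally disjoint with witnesses $A',A''$. To identify $M$: if $f\in M$ and $A\in\cA_\fin$, then $\indic_A\clg f\in M\cap L_p(A,\mu_A)$, so $\indic_A\clg f=P(\indic_A\clg f)=\indic_{A'}\clg f$, hence $\indic_{A''}\clg f=0$ and $\mu''(A\cap[f\ne0])=\mu(A''\cap[f\ne0])=0$; letting $A$ exhaust the $\sigma$-finite set $[f\ne0]$ yields $\mu''([f\ne0])=0$. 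Conversely, if $f\in L_p(\mu)$ vanishes $\mu''$-a.e., then---since $M$ is closed and $\set{f\in L_p(\mu)}{\mu([f\ne0])<\infty}$ is dense---we may assume $[f\ne0]\sse A$ for some $A\in\cA_\fin$; then $\mu(A''\cap[f\ne0])=\mu''(A''\cap[f\ne0])\le\mu''([f\ne0])=0$, so $\indic_{A''}\clg f=0$ and $Pf=\indic_{A'}\clg f=\indic_A\clg f=f$. Thus $M$ is as in \eqref{equ1}, and \eqref{equ2} follows from \rma.

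The step I expect to be the main obstacle is the choice of the complementary measure $\mu''$ with $\mu=\mu'+\mu''$: on sets where $\mu$ and $\mu'$ are both infinite the naive $\mu''=\mu-\mu'$ is meaningless. One handles this by first writing $\mu$ as the sum of its semi-finite version and its ``purely infinite part''---the $\{0,\infty\}$-valued measure $E\mapsto\sup\set{\mu(N)}{N\in\cA,\ N\sse E,\ N\text{ has no subset of finite positive measure}}$---and then taking $\mu''$ to be $E\mapsto\sup_{A\in\cA_\fin}\mu(E\cap A'')$ plus this purely infinite part; checking that these are measures that add up to $\mu$ is precisely where the generality of the measure space (beyond the semi-finite case) is felt. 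The remaining verifications---consistency of $(A')_{A\in\cA_\fin}$ and that $\mu'$ is a semi-finite measure---are routine.
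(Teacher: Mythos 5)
Your proposal is correct, but both halves take a genuinely different route from the paper's. In (a) the paper shows that $L_p(\mu)=M\oplus N$ (with $N$ the right-hand side of \eqref{equ2}): every $f$ splits as $\indic_{A'}\clg f+\indic_{A''}\clg f$ along a splitting of the $\sigma$-finite set $[f\ne0]$, and the resulting projection satisfies $0\le P\le I$, so it is a band projection; this gives the band property of $M$ and $N=M^\perp$ in one stroke. You instead prove the single identity $M=N^\perp$ by hand (disjointness computations on finite-measure subsets, using $\indic_{A''}\in N$ as test functions) and obtain \eqref{equ2} by symmetry; this is slightly more elementary -- it avoids the characterisation of positive projections $0\le P\le I$ as band projections -- at the cost of two inclusion arguments, and it is complete as written. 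In (b) the divergence is larger: the paper simply defines $\mu'(A):=\mu(A')$, $\mu''(A):=\mu(A'')$ for $A\in\cA_\sfin$ and $\mu'(A)=\mu''(A):=\infty$ off $\cA_\sfin$, so the complementary-measure problem you flag never arises; $\sigma$-additivity reduces to the consistency relation $A'=B'\cap A$ \,$\mu$-a.e.\ plus the observation that a non-$\sigma$-finite set is not a countable union of $\sigma$-finite sets. Your construction $\mu'(E)=\sup_{A\in\cA_\fin}\mu(E\cap A')$, $\mu''=\nu+\pi$ with $\nu(E)=\sup_{A\in\cA_\fin}\mu(E\cap A'')$ and $\pi$ the purely infinite part, does work: consistency makes the families upward directed, so the suprema are measures; $\mu'+\nu$ equals the semi-finite version of $\mu$; and the identity ``$\mu=$ semi-finite version $+\ \pi$'' with your specific $\pi$ holds (if the semi-finite version of $\mu$ on $E$ is finite and $\pi(E)=0$, exhaust $E$ by finite-measure sets with union $F$; then $E\sms F$ has no subset of finite positive measure, hence is $\mu$-null), so the obstacle you single out is surmountable -- but this is noticeably more verification than the paper needs. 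What each buys: the paper's decomposition has $\mu'=\mu''=\infty$ off $\cA_\sfin$ (cf.\ Remark~\ref{remmain-1}(b)), whereas yours yields a semi-finite $\mu'$ and pushes all purely infinite mass into $\mu''$ -- a different, equally admissible decomposition. Two small points to tighten: the reduction ``we may assume $[f\ne0]\sse A\in\cA_\fin$'' should be phrased via the truncations $\indic_{A_n}\clg f$, which still vanish $\mu''$-a.e.\ and converge to $f$ with $M$ closed (clearly what you intend); and the consistency of $(A')_{A\in\cA_\fin}$, which you and the paper both call routine, is indeed the same verification as in Proposition~\ref{prop-main}.
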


\begin{remark}\label{remmain-2}
We note that the definition of $M$ in Theorem~\ref{mainthm}(a) is 
meaningful: if $f,g\colon\Omega\to\R$ are measurable functions, $f=0$ 
\,$\mu''$-a.e.\ and $f=g$ \,$\mu$-a.e., then $\mu''([f\ne 
g])\le \mu([f\ne g])=0$, hence $g= f =0$ \,$\mu''$-a.e.
\end{remark}

\begin{proof}[P\,r\,o\,o\,f \ o\,f \ T\,h\,e\,o\,r\,e\,m\,~\ref{mainthm}]
(a) Denote the right hand side of \eqref{equ2} by $N$. 
We show that $L_p(\mu) = M \oplus N$ is a direct sum and that the 
projection $P$ onto $M$ satisfies $0\le P\le I$; this implies the assertions.

If $f\in L_p(\mu)$ satisfies $f=0$ \,$\mu''$-a.e.\ as well as $f=0$ 
\,$\mu'$-a.e., 
then $\mu([f\ne 0])=\mu'([f\ne 0])+\mu''([f\ne\nobreak0])=0$, i.e., $f=0$ 
\,$\mu$-a.e. 
This shows $M\cap N=\{0\}$.
Now let $f\in L_p(\mu)$. Then $A:=[f\ne 0]$ is $\sigma$-finite, and from 
$\mu'\lperp\mu''$ we infer that $A$ is the
disjoint union of two sets $A',A''\in\cA$ with 
$\mu'(A'')=\mu''(A')=0$. This shows 
that $f=\indic_{A'}\clg f + \indic_{A''}\clg f\in M+N$. In particular one 
obtains $Pf=\indic_{A'}\clg f$, and this implies $0\le P\le I$.

(b) Let $P$ denote the band projection onto $M$.

First let $A\in\cA_\sfin$. The restriction of  $P$ to $L_p(A,\mu_A)$ is a band 
projection, and from Proposition~\ref{prop-main} we know that 
$A$ is the disjoint union of two sets $A',A''\in\cA$ such that 
$Pf=\indic_{A'}\clg f$ ($f\in L_p(A,\mu_A)$). We define
\[
\mu'(A):=\mu(A'),\quad \mu''(A):=\mu(A'').
\]
For $A\in\cA\setminus\cA_\sfin$ we define $\mu'(A)=\mu''(A):=\infty$. From 
these definitions it is now clear that $\mu=\mu'+\mu''$, and that 
$\mu'\lperp\mu''$ (provided we know that $\mu'$ and $\mu''$ are measures).

As a preparation for the proof of the $\sigma$-additivity of $\mu', \mu''$ we 
show: if $A\sse B$ belong to $\cA_\sfin$, and $A',A'', B',B''$ are as defined 
above, then 
$A' = B'\cap A$ and $A'' = B'' \cap A$ \,$\mu$-a.e.
Indeed, let $f\in L_p(\mu)$ satisfy $[f\ne0]\sse A$. Then the previous 
definitions 
imply that $\indic_{A'}\clg f=Pf=\indic_{B'}\clg f=\indic_{B'\cap A}\clg f$ 
\,$\mu$-a.e. As this 
holds for all $f\in L_p(\mu)$ with $[f\ne0]\sse A$ one concludes that 
$A'=B'\cap A$ \,$\mu$-a.e. This implies also the 
analogous property for $A''$ and $B''$.

Now we show that $\mu',\mu''$ are $\sigma$-additive.
Recall that $\mu'=\mu''=\infty$ on
$\cA\setminus\cA_\sfin$, and note that sets in
$\cA\setminus\cA_\sfin$ cannot be obtained as a countable union of sets in
$\cA_\sfin$. Thus it is sufficient
to show the $\sigma$-additivity on $\cA_\sfin$. Let $(A_n)_{n\in\N}$ be a
disjoint sequence in $\cA_\sfin$, and
let $A:=\bigcup_{n\in\N}A_n$. Let $A',A'',A_n',A_n''$ ($n\in\N$) be the 
corresponding  sets defined above. From the previous paragraph we know that 
$A_n'=A'\cap A_n$ and $A_n''=A''\cap A_n$ \,$\mu$-a.e. Therefore 
the $\sigma$-additivity of $\mu$ shows that
\[
\sum_{n=1}^\infty\mu'(A_n)=\sum_{n=1}^\infty\mu(A_n')=\sum_{n=1}
^\infty\mu(A'\cap A_n) =\mu(A')=\mu'(A),
\]
and similarly for $\mu''$. This completes the proof that $\mu',\mu''$ are 
(locally disjoint) measures on $\cA$.

From the definition of $\mu',\mu''$ it is clear that $Pf=0$ \,$\mu''$-a.e.\ and 
$(I-P)f=0$ \,$\mu'$-a.e.\ for all $f\in L_p(\mu)$. This shows that $\mu',\mu''$ 
are as asserted.
\end{proof}

\begin{remark}\label{remmain-1}
(a) The description of bands given in Theorem~\ref{mainthm} implies the 
description in Proposition~\ref{prop-main}. Indeed, if $\mu',\mu''$ as in 
Theorem~\ref{mainthm} are locally disjoint and the measure $\mu$ is weakly 
localisable, then it follows that there exists a locally measurable set 
$\Omega_0$ such that $\mu'=\indic_{\Omega_0}\mu$ and 
$\mu''=\indic_{\Omega\setminus\Omega_0}\mu$.

(b) In the construction of $\mu'$ and $\mu''$ in the proof of part (b) of 
Theorem~\ref{mainthm} we obtained $\mu',\mu''$ with
\[
\cA_{\sfin,\mu'}=\cA_{\sfin,\mu''}=\cA_{\sfin},
\]
in particular such that $\mu'(A)=\mu''(A)=\infty$ for all $A\in\cA\setminus
\cA_\sfin$. This might not be the case for the decomposition supposed in part 
(a) of Theorem~\ref{mainthm}.

(c) We want to rephrase the assertion of Theorem~\ref{mainthm}(b) in the form 
that every band $M$ of 
$L_p(\mu)$ is of the form $M=L_p(\mu')$, with $\mu'$ as 
in Theorem~\ref{mainthm}(b). 

In order to interpret this statement we embed $L_p(\mu')$ into 
$L_p(\mu)$ as follows. For all $f\in L_p(\mu')$ there exists an element 
$\tilde f$ in the $\mu'$-equivalence class of $f$ with $\tilde f=0$ 
\,$\mu''$-a.e. (Recall that $A:=[f\ne0]\in\cA_\sfin$, $\mu'(A'')=0$, hence 
$\tilde f := \indic_{A'}\clg f = f$ as elements of $L_p(\mu')$.) The embedding 
of 
$L_p(\mu')$ into $L_p(\mu)$ is then given by $f\mapsto \tilde f$.

(d) Theorem~\ref{mainthm}(b) is remindful of 
\cite[Theorem~6]{tza-69}, \cite[Theorem~4.1]{ber-lac-74}, where it is 
shown that every band in $L_p(\mu)$ (and more generally every range of a 
contractive projection) is isomorphic to some $L_p(\nu)$. The point in our 
result is that $\mu'$ and $\mu''$ are such that $\mu=\mu'+\mu''$, 
$M=L_p(\mu')$, and $M^\perp=L_p(\mu'')$.
\end{remark}

\section{Examples}
\label{example}

The first issue of this section is to give an example of a measure space 
$(\Omega,\cA,\mu)$ with a band in $L_p(\mu)$ that cannot be described as 
$L_p(\Omega_0,\mu_{\Omega_0})$ for a locally measurable set 
$\Omega_0\sse\Omega$.
The example is a version of an example given by Fremlin 
\cite[Example~5(c)]{fre-78}.

In view of Proposition~\ref{prop-main} it obviously is redundant to present 
this example. However, our modification of Fremlin's example consists in the 
feature that one of the factors of the cartesian product $\Omega$ is the 
interval $(0,1)$, with Lebesgue measure, and it is this modification that
makes it possible to present the second example treated in this section, an 
interesting non-trivial absorption semigroup on $L_p(\mu)$.

Let $\mathfrak c$ denote the cardinality of the continuum. Let $\Gamma$ be a 
set of cardinality greater than $\mathfrak c$, and let
\[
\Omega := (0,1)\times \Gamma.
\]
On $(0,1)$ we use the Lebesgue measure $\lambda$.
On $\Gamma$ we use the $\sigma$-algebra consisting of the countable and the 
co-countable subsets of $\Gamma$ and the measure
\[
\nu(G):=
\begin{cases}
0   &\text{if }G\sse\Gamma\text{ is countable},\\
1   &\text{if }G\sse\Gamma\text{ is co-countable}.
\end{cases}
\]
The $\sigma$-algebra $\cA$ on $\Omega$ is defined by
\begin{align*}
\cA :=\bigl\{A\sse\Omega\setcol &A_x\text{ countable or co-countable 
}\bigl(x\in(0,1)\bigr),\\
&A^\gamma\text{ a Borel set in }(0,1)\ (\gamma\in\Gamma)\bigr\}, \\[-1.3\baselineskip]
\end{align*}
where \vspace{-0.3\baselineskip}
\begin{align*}
A_x &:= \set{\gamma\in\Gamma}{(x,\gamma)\in A}, 
\\
A^\gamma &:= \set{x\in(0,1)}{(x,\gamma)\in A}.
\end{align*}
For $A\in\cA$ we define
\[
\mu'(A) :=\sum_{x\in(0,1)}\nu(A_x),\quad 
\mu''(A):=\sum_{\gamma\in\Gamma}\lambda(A^\gamma),
\]
and we define $\mu:=\mu'+\mu''$. It is easy to check that $\mu',\mu''$ and 
$\mu$ are measures on~$\cA$. 

If $A\in\cA$, $\mu(A)<\infty$, then the set
\[
I':=\set{x\in(0,1)}{\Gamma\setminus A_x\text{ countable}}
\]
is finite. Then for the set
\[
A':=\set{(x,\gamma)\in A}{x\in I'}=A\cap(I'\times\Gamma)\in\cA\cap A
\]
one obtains
\[
\mu''(A')\le \mu''(I'\times\Gamma) 
=\sum_{\gamma\in\Gamma}\lambda(I') = 0
\]
and
\[
\mu'(A\setminus A')\le \mu'\Bigl(A\cap\bigl(\bigl((0,1)\setminus 
I'\bigr)\times\Gamma\bigr)\Bigr)=\sum_{x\in(0,1)\setminus I'}\nu(A_x)=0.
\]
This shows that $\mu'\lperp\mu''$.

\begin{proposition}\label{prop-example}
Let $1\le p<\infty$. With the measure space $(\Omega,\cA,\mu)$ and the
decomposition $\mu=\mu'+\mu''$ defined above let
\[
M:=\set{f\in L_p(\mu)}{f=0\ \,\mu''\text{-a.e.}} 
\]
be the band as in Theorem~\ref{mainthm}(a). Then the band projection $P$ onto 
$M$ is not a 
multiplication operator, i.e., there exists no locally 
measurable function $h\colon\Omega\to\R$ such that $Pf=hf$ ($f\in 
L_p(\mu)$).
\end{proposition}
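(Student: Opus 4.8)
The plan is to argue by contradiction: suppose there is a locally measurable $h\colon\Omega\to\R$ with $Pf = hf$ for all $f\in L_p(\mu)$. As noted in Section~\ref{proj-bands}, the set $\Omega\setminus([h=0]\cup[h=1])$ is then a local null set, and $P$ is multiplication by $\indic_{\Omega_0}$ with $\Omega_0 := [h=1]$ a locally measurable set; moreover $M = \sset{f\in L_p(\mu)}{f=0\text{ $\mu$-a.e.\ on }\Omega\setminus\Omega_0}$. So the first step is to reduce to the claim that \emph{no} locally measurable $\Omega_0$ can represent $M$ in this way.

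The second step is to compute, for fixed $x\in(0,1)$ and a co-countable set $G\sse\Gamma$ with $\lambda$-small ``width'', what $P$ does to simple functions, using the explicit local disjoint decomposition from the text: if $A\in\cA$ has $\mu(A)<\infty$, then $A' = A\cap(I'\times\Gamma)$ with $I'$ finite, and $\mu''(A')=0$, $\mu'(A\setminus A')=0$; hence $P\indic_A = \indic_{A'}$ up to $\mu$-null sets. In particular, taking $A = J\times G$ with $J\sse(0,1)$ an interval of finite (small) length and $G$ co-countable, one has $\mu(A) = \lambda(J)\cdot\#\{\gamma\in\Gamma; ...\} + ...$; the key point is that a single ``vertical fibre'' $\{x\}\times G$ has $\mu$-measure $\nu(G)=1>0$ and sits inside $M$ (since $\mu''(\{x\}\times G)=\lambda(\emptyset)=0$), whereas a ``horizontal fibre'' $J\times\{\gamma\}$ has $\mu$-measure $\lambda(J)>0$ and sits inside $M^\perp$. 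So if $\Omega_0$ represented $M$, then for $\mu'$-a.e.\ point the fibre structure would force $\Omega_0$ to contain $\mu'$-almost all of each vertical fibre and to miss $\mu''$-almost all of each horizontal fibre.

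The third and main step is to derive a contradiction from this fibrewise description together with the constraint that $\Omega_0$ is \emph{locally measurable}, i.e.\ $\Omega_0\cap A\in\cA$ for every $A\in\cA_\fin$. Fix any $\gamma\in\Gamma$ and any interval $J\subset(0,1)$ of finite length: $A := J\times\{\gamma\}\in\cA_\fin$, so $\Omega_0\cap A\in\cA$, forcing $(\Omega_0)^\gamma\cap J$ to be a Borel subset of $(0,1)$; and since $J\times\{\gamma\}\in M^\perp$ we get $\lambda\bigl((\Omega_0)^\gamma\cap J\bigr)=0$, hence $(\Omega_0)^\gamma$ is $\lambda$-null for every $\gamma\in\Gamma$. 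On the other hand, testing against vertical fibres: for each $x$ in a suitable cofinite subset of a given finite set of abscissae, $\{x\}\times G\in M$ for every co-countable $G$, which forces $(\Omega_0)_x$ to be co-countable for ``$\mu'$-many'' $x$. The obstruction is the classical cardinality argument of Fremlin: the set $\bigcup_{\gamma}(\Omega_0)^\gamma$, being a countable-per-$\gamma$ union ... more precisely, because each $(\Omega_0)^\gamma$ is $\lambda$-null it is in particular not all of $(0,1)$, so for each $\gamma$ there is $x_\gamma\notin(\Omega_0)^\gamma$, i.e.\ $\gamma\notin(\Omega_0)_{x_\gamma}$; since $|\Gamma|>\mathfrak c = |(0,1)|$, by pigeonhole some single $x\in(0,1)$ has $\gamma\notin(\Omega_0)_x$ for uncountably many $\gamma$, so $(\Omega_0)_x$ fails to be co-countable — contradicting that $(\Omega_0)_x$ must be co-countable (up to the $\mu'$-null exceptional set, which is handled by noting the exceptional set of abscissae is finite, hence can be avoided while keeping uncountably many $\gamma$). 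This cardinality collision is the heart of the matter; everything else is bookkeeping with the two fibrewise measures $\mu'$ and $\mu''$.

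\begin{remark}
The only delicate point in writing this up carefully is matching the ``$\mu$-a.e.'' slack in $P\indic_A=\indic_{A'}$ with the fibrewise statements: one must observe that a $\mu$-null set $N\in\cA$ has $\mu'(N)=\mu''(N)=0$, so $N_x$ is countable for all $x$ and $N^\gamma$ is $\lambda$-null for all $\gamma$; this lets the fibrewise conclusions above survive passage through null sets, and in particular the finitely many exceptional abscissae (where $(\Omega_0)_x$ need not be co-countable) do not interfere with the pigeonhole argument on $\Gamma$.
\end{remark}
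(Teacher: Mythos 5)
Your proof is correct and takes essentially the same route as the paper's own Fremlin-type cardinality argument: the vertical fibres $\indic_{\{x\}\times\Gamma}\in M$ force the sections of the representing set (resp.\ of $[h\ne 1]$) to be co-countable for every $x\in(0,1)$, and $\card\Gamma>\mathfrak c$ then collides with the horizontal fibres $\indic_{(0,1)\times\{\gamma\}}\in M^\perp$ -- the paper just phrases the collision by locating a single $\gamma$ with $h(\cdot,\gamma)\equiv 1$, while you use a pigeonhole over chosen points $x_\gamma\notin(\Omega_0)^\gamma$, and your preliminary reduction of $h$ to $\indic_{\Omega_0}$ is an extra (harmless) step the paper avoids by working with $hf=f$ on $M$ directly. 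Your hedging about a ``finite exceptional set of abscissae'' is unnecessary: each vertical fibre $\{x\}\times\Gamma$ lies in $\cA_\fin$ and $\indic_{\{x\}\times\Gamma}\in M$, so the identity $P f=\indic_{\Omega_0}f$ applies to it exactly and $(\Omega_0)_x$ is co-countable for \emph{every} $x$, with no exceptional set at all.
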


\begin{proof}
Let $h\from\Omega\to\R$ be locally measurable, with $hf=f$ for 
all $f\in M$. 
Applying this equality to functions
$f=\indic_{\{x\}\times\Gamma} \in M$, where $x\in I$, one obtains
$h(x,\cdot) = 1$ \,$\nu$-a.e.\ for all $x\in(0,1)$.
This implies that the set $[h(x,\cdot)\ne1]$ is countable for
all $x\in(0,1)$; hence the set $[h\ne1]$ has cardinality less or
equal $\mathfrak c$.

Since $\Gamma$ has cardinality greater than $\mathfrak c$,
it follows that there exists $\gamma\in\Gamma$ such that
$h(\cdot,\gamma)=\indic_{(0,1)}$. This means that $hg=g$ for the function 
$g=\indic_{(0,1)\times\{\gamma\}}\in M^\perp$, and this shows that 
multiplication by $h$ cannot be the band projection onto $M$.
\end{proof}

Taking into account that $L_p(\Gamma,\nu)$ is isomorphic to $\R$, 
one sees that the space $M=L_p(\mu')$ is isomorphic to $\ell_p(0,1)$. 
The space $M^\perp=L_p(\mu'')$ is 
isomorphic to $\ell_p\bigl(\Gamma;L_p(0,1)\bigr)$.

\medskip

The second issue of this section is in connection with absorption semigroups. 
Starting with a positive $C_0$-semigroup $T=(T(t))_{t\ge0}$ on $L_p(\mu)$, for 
some measure space $(\Omega,\mu)$ and some $p\in[1,\infty)$, and with a locally 
measurable function $V\from\Omega\to[0,\infty]$ (the `absorption rate'), one 
defines the `absorption semigroup' $T_V$ by
\[
T_V(t) := \slim_{n\to\infty}\eul^{t(A-V\land n)}\qquad(t\ge0),
\]
where $A$ denotes the generator of $T$. (The limit exists by monotone 
convergence. We refer to \cite{voi-86}, \cite{voi-88}, \cite{are-bat-93}, 
\cite{man-vog-voi-16} for more information.) It was shown in 
\cite[Corollary~3.3]{are-bat-93} that then
\[
P := \slim_{t\to0\rlim}T_V(t)
\]
exists and is a band projection. 
One might hope that $P$ is a multiplication operator in this
particular context. 
However, we will present an example where the band corresponding to $P$ is as 
in Proposition~\ref{prop-example}.

We return to the special setting of Proposition~\ref{prop-example}. We define 
a $C_0$-semigroup $T$ on $L_p(\mu)$ as the direct 
sum of $C_0$-semigroups $T'$ and $T''$ on 
$M$ and $M^\perp$. On $M$ we 
define $T'(t)=I'$ (the identity operator on $M$) for all $t\ge0$. On $M^\perp$ 
we define $T''$ as the right translation semigroup on all the `fibers' of 
$M^\perp=\ell_p\bigl(\Gamma;L_p(0,1)\bigr)$,
\[
T''(t)f(x,\gamma) = f(x-t,\gamma)\qquad(t\ge0,\ (x,\gamma)\in\Omega),
\]
for $f\in M^\perp$ (with $f(x-t,\gamma):=0$ if $x-t\le0$).

Putting $T'$ and $T''$ together we obtain a positive $C_0$-semigroup 
$T=T'\oplus T''$ on $L_p(\mu)=M\oplus M^\perp$.

Let $W\colon(0,1)\to[0,\infty)$ be a Borel-measurable function with the 
property that $W\notin L_1(U)$ for any open subset $\varnothing\ne U\sse(0,1)$. 
Then $V(x,\gamma):=W(x)$ defines a locally measurable function 
$V\from\Omega\to[0,\infty)$.

\begin{proposition}\label{prop-example-2}
In the example described above, the absorption semigroup $T_V$ is given by
\[
T_V(t)f = \eul^{-tV}\clg f
\]
for $f\in M=L_p(\mu')$, and by
\[
T_V(t)f = 0\qquad(t>0)
\]
for $f\in M^\perp = L_p(\mu'')$. As a consequence,
\[
\smash{P := \slim_{t\to0\rlim}T_V(t)}
\]
is the band projection onto $M$.
\end{proposition}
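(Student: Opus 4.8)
The plan is to analyse the two summands $M$ and $M^\perp$ separately, since $T = T'\oplus T''$ splits along the band decomposition and the approximating semigroups $\eul^{t(A - V\land n)}$ respect this splitting (because $V = W(x)$ depends only on the $(0,1)$-coordinate, hence multiplication by $V\land n$ leaves both $M$ and $M^\perp$ invariant, and $A$ does too by construction). So I would first record that $T_V(t) = T'_V(t)\oplus T''_V(t)$, where $T'_V$, $T''_V$ are the absorption semigroups associated with $(T',V)$ on $M$ and $(T'',V)$ on $M^\perp$ respectively.

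On $M = L_p(\mu')$ the unperturbed semigroup is the identity, so the generator is $A' = 0$ and the perturbed generators are simply the bounded multiplication operators $-(V\land n)$. Then $\eul^{-t(V\land n)}f \to \eul^{-tV}f$ in $L_p(\mu')$ by dominated convergence (the integrand is bounded by $|f|^p$), giving $T'_V(t)f = \eul^{-tV}f$; note that on $M$ the function $V$ is $\mu'$-a.e.\ well-defined and finite. Letting $t\to 0\rlim$, again by dominated convergence $\eul^{-tV}f \to f$ in $M$, so $P$ restricted to $M$ is the identity.

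On $M^\perp = \ell_p(\Gamma; L_p(0,1))$ the semigroup $T''$ acts fibrewise as right translation, and $V$ acts fibrewise as multiplication by $W$; so it suffices to work in a single fibre $L_p(0,1)$ with the right-translation semigroup and absorption rate $W$. Here the key computation is that for $f \in L_p(0,1)$ and $t > 0$,
\[
T''_V(t)f(x) = \exp\Bigl(-\int_{x-t}^x W(s)\di s\Bigr)f(x-t)
\]
for $x > t$ and $0$ otherwise — this is the standard Feynman–Kac/variation-of-constants formula for a translation semigroup perturbed by a multiplication potential, obtained as the monotone limit of the analogous formulas with $W$ replaced by $W\land n$. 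The hypothesis on $W$ now enters decisively: for every $x \in (0,1)$ and every $t \in (0,x)$ the interval $(x-t,x)$ is a nonempty open subset of $(0,1)$, so $\int_{x-t}^x W = \infty$ and the exponential factor is $0$. Hence $T''_V(t)f = 0$ for all $t > 0$, and consequently $P$ vanishes on $M^\perp$.

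Combining the two cases, $P$ is the identity on $M$ and zero on $M^\perp$, i.e.\ $P$ is the band projection onto $M$, as claimed.

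I expect the main obstacle to be justifying the explicit Feynman–Kac formula for $T''_V$ rigorously — specifically, identifying the monotone limit $\slim_n \eul^{t(A'' - W\land n)}$ with the stated integral expression, and handling the passage $W\land n \uparrow W$ together with the possibility that the integral is infinite. This is routine for bounded potentials (where the variation-of-constants formula solves the evolution equation directly), and the limit $n\to\infty$ is controlled by monotone convergence of $\eul^{-\int_{x-t}^x (W\land n)} \downarrow \eul^{-\int_{x-t}^x W}$ pointwise, together with the domination $\bigl|T''_{W\land n}(t)f\bigr| \le T''(t)|f|$ to pass the limit through the $L_p$-norm; but one should state these steps carefully. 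The rest — the fibrewise reduction on $M^\perp$, the dominated-convergence arguments on $M$, and the final assembly of $P$ — is straightforward.
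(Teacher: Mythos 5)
Your proposal is correct and follows essentially the same route as the paper: the formula on $M=L_p(\mu')$ is immediate since $T'$ is the identity, and on $M^\perp$ one uses the fibrewise translation-with-absorption formula (which the paper obtains from \cite[VI.2, (2.4)]{eng-nag-00} for bounded $W$ and then by monotone approximation $W\land n\uparrow W$), so that the hypothesis $W\notin L_1(U)$ forces every exponential factor, hence $T_V(t)$ on $M^\perp$, to vanish for $t>0$. Your integration bounds $\int_{x-t}^{x}W(s)\di s$ are the correct ones (the paper's upper limit $t$ appears to be a typo), and your care about identifying the strong limit with the pointwise formula matches the level of detail the paper leaves to the cited reference.
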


\begin{proof} The expression for $T_V$ on $M=L_p(\mu')=\ell_p(0,1)$ is 
immediate.

In order to motivate $T_V(t)=0$ on $M^\perp$ we mention that on each fiber the 
semigroup describes right translation with the absorption rate $W$. This is 
expressed 
by the formula
\begin{equation}\label{eq-abs}
T_V(t)f(x,\gamma) = \exp\Bigl(-\int_{x-t}^tW(s)\di s\Bigr)f(x-t,\gamma)
\end{equation}
for $f\in M^\perp =L_p(\mu'')=\ell_p(\Gamma;L_p\bigl(0,1)\bigr)$. (For bounded 
$W$, the expression \eqref{eq-abs} is a special case of \cite[VI.2, 
(2.4)]{eng-nag-00}. Approximating general $W$ by $W\land n$, with $n\in\N$,  
one obtains \eqref{eq-abs} for general $W$.)
The circumstance that 
\[
\int_{x-t}^tW(s)\di s = \lim_{n\to\infty}\int_{x-t}^t(W(s)\land n)\di s=\infty
\]
for all the integrals in the exponential yields 
the result $T_V(t)f=0$.

The last assertion is then obvious.
\end{proof}

{\frenchspacing

}
\bigskip

\noindent
Hendrik Vogt\\
Fachbereich Mathematik\\
Universit\"at Bremen\\
Postfach 330 440\\
28359 Bremen, Germany\\
{\tt 
hendrik.vo\rlap{\textcolor{white}{hugo@egon}}gt@uni-\rlap{\textcolor{white}{%
hannover}}bremen.de}\\[3ex]
J\"urgen Voigt\\
Technische Universit\"at Dresden\\
Fachrichtung Mathematik\\
01062 Dresden, Germany\\
{\tt 
juer\rlap{\textcolor{white}{xxxxx}}gen.vo\rlap{\textcolor{white}{yyyyyyyyyy}}%
igt@tu-dr\rlap{\textcolor{white}{%
zzzzzzzzz}}esden.de}

\end{document}